\newtheorem{thm}{Theorem}[section]
\theoremstyle{remark}
\newtheorem{rem}{Remark}[section]
\begin{document}

\title[A maximal relative symplectic packing construction]{A maximal relative symplectic packing construction}

\author[L. Buhovsky]{Lev Buhovsky\footnote{The author was partially supported by the ISRAEL SCIENCE
FOUNDATION (grant No.~1227/06 *).}}

\address{The Mathematical Sciences Research Institute\\ Berkeley, CA 94720-5070\\ USA}
\email{levbuh@gmail.com}

\urladdr{\newline
Received 12/12/2008, accepted 8/17/2009}

\maketitle

\begin{abstract}
 In this paper we present an explicit construction of a relative symplectic packing.
 This confirms the sharpness of the upper bound for the relative
 packing of a ball into the pair $ ( \mathbb{CP}^2, \mathbb{T}_{ \rm{Cliff}}^2 )
 $ of the standard complex projective plane and the Clifford
 torus, obtained by Biran and Cornea.
\end{abstract}

\section{Introduction and main results} \label{S:intro}

In this note we present an explicit construction of a relative
packing. The subject of symplectic packing was introduced first in
the seminal work of Gromov \cite{Gr}. Gromov showed that looking
at symplectic embeddings of a standard ball into a symplectic
manifold, one may obtain an upper bound on the radius of a ball
which is stronger than the obstruction coming from the volume. The
first theorem in this direction is a non-squeezing theorem from
\cite{Gr}. This result has led to the definition of the Gromov
capacity, which plays an important role in modern symplectic
geometry.
 Later the subject of symplectic packing was treated by
Biran, Karshon, Mc'Duff, Polterovich, Schlenk, Traynor and others,
see \cite{Bi-1,Bi-2,Bi-3,Bi-4,K,M-P,Sch-1,Sch-2,Sch-3,Tr}. New
obstructions for symplectic packings of various domains were
found. On the other hand, attempts were made to find explicit
constructions of certain symplectic embeddings, in order to show
that the obstructions, which were found, are tight (see, e.g.,
\cite{K,Sch-4,Tr}). This note is devoted to proving a new result
in this direction.

 Recently, Biran and Cornea \cite{Bi-Co} found new
obstructions on the relative symplectic packing in a number of
situations, which are stronger than those in the case of a usual
packing. In this note we consider one specific example
from \cite{Bi-Co}, and show that the corresponding obstruction is
sharp.

Let us define first the notion of a relative symplectic packing.
Consider a symplectic manifold $ (M^{2n},\omega) $ and a closed
Lagrangian submanifold $ L^{n} \subset M $. Take a standard open
ball $ B^{2n}(r) \subset (\mathbb{R}^{2n} , \omega_{\rm std} ) $
of radius $ r>0 $, where $ \mathbb{R}^{2n} $ is endowed with
coordinates $ (q,p) $, and set \[ B^{2n}_{\mathbb{R}}(r) = \{
(q,p) \in B^{2n}(r) \mid p=0 \} .\] A relative packing of $
B^{2n}(r) $ into $ (M,L) $ is by definition a symplectic embedding
\[
i : (B^{2n}(r), \omega_{\rm std} ) \hookrightarrow (M^{2n},\omega
)
\]
such that $ i^{-1}(L)=
B^{2n}_{\mathbb{R}}(r) $.

 In this note we treat the situation where $ (M, \omega) = (
\mathbb{CP}^{2},\omega_{\rm FS})$ and $ L = \mathbb{T}^{2}
\hookrightarrow ( \mathbb{CP}^{2},\omega_{\rm FS})$ is the
standard Clifford torus in $ M $. Here, $ \omega_{\rm FS} $ is
normalized so that $ \int_{\mathbb{CP}^1} \omega_{\rm FS} = \pi$.
In \cite{Bi-Co} it was shown that given a relative packing
\[
\big( B^{4}(r) , B^{4}_{\mathbb{R}}(r) \big) \hookrightarrow (
\mathbb{CP}^{2},\mathbb{T}^{2}),
\]
there is an upper bound for
the radius of this ball : $ r \leqslant \sqrt{\frac{2}{3}} $.
 Our main result is the following.

\begin{thm} \label{T:pack example}
 For every $ r < \sqrt{\frac{2}{3}} $ there exists a relative packing
 \[
 \big( B^{4}(r) , B^{4}_{\mathbb{R}}(r) \big) \hookrightarrow ( \mathbb{CP}^{2},\mathbb{T}^{2}).
 \]
\end{thm}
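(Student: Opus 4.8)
\emph{Setup and reduction.} The plan is to pass to action--angle coordinates for the standard toric structure. Away from the three complex coordinate lines, $(\mathbb{CP}^2,\omega_{\rm FS})$ is symplectomorphic to $T^2\times\mathrm{int}(\Delta)$, where $T^2=(\mathbb{R}/2\pi\mathbb{Z})^2$ carries angle coordinates $\theta=(\theta_1,\theta_2)$, the action coordinate $y=(y_1,y_2)$ ranges over the open moment triangle $\Delta$, and $\omega_{\rm FS}=dy_1\wedge d\theta_1+dy_2\wedge d\theta_2$. With the normalization $\int_{\mathbb{CP}^1}\omega_{\rm FS}=\pi$ the three edges of $\Delta$ lie at equal affine distance from the barycenter $b$, and the Clifford torus $\mathbb{T}^2$ is exactly the fibre over $b$. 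Translating so that $b=0$, the torus becomes the zero-section $\{y=0\}$, and the three minimal Maslov-index-$2$ holomorphic disks bounded by $\mathbb{T}^2$ (it is monotone) each have area $\pi/3$; a wall of $\Delta$ thus sits at action-depth $1/6$ below the zero-section, since $2\pi\cdot\tfrac16=\pi/3$. In this language Theorem~\ref{T:pack example} asks, for every $r<\sqrt{2/3}$, for a symplectic embedding $i:B^4(r)\to T^2\times\mathrm{int}(\Delta)$ with $i(B^4_{\mathbb{R}}(r))\subset\{y=0\}$. Such an $i$ is necessarily very far from toric: the real part, which in its own coordinates fibres over the whole ball triangle, must be flattened onto the single fibre $\{y=0\}$.

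\emph{The construction.} First I would pre-compose with a linear symplectomorphism of the shearing type $(\theta,y)\mapsto(\lambda\theta,\lambda^{-1}y)$, turning $B^4(r)$ into an ellipsoid that is thin in the action directions $y$ and long in the angle directions $\theta$, while preserving the real disk $\{y=0\}$. I would then define, by an explicit generating function, a nonlinear symplectomorphism that (i) leaves $\{y=0\}$ setwise fixed, so the real disk stays on the zero-section, (ii) bends the body of the ball so that it reaches toward the toric divisors by at most one minimal-disk depth $1/6$ in the two action directions, and (iii) lets the long angle directions wrap once around $T^2$. Constructing the composite into the universal cover $\mathbb{R}^2_\theta\times\mathrm{int}(\Delta)$ and projecting by the period lattice $(2\pi\mathbb{Z})^2$ produces the desired $i$. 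The threshold is forced by the balance between the symplectic room available toward the divisors, governed by the minimal disk area $\pi/3$, and the capacity $\pi r^2$ of the ball to be accommodated there; the optimal matching occurs at $\pi r^2=2\pi/3$, i.e. twice the minimal disk area, which is precisely the content of $r<\sqrt{2/3}$.

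\emph{Verification and main obstacle.} One then checks that $i$ is injective (the angle-extent of the wrapped ball is strictly less than the period $2\pi$, so the projection identifies no distinct points), that it is a symplectomorphism onto its image, that the image stays inside $T^2\times\mathrm{int}(\Delta)$, and that $i(B^4_{\mathbb{R}}(r))=\{y=0\}\cap\mathrm{im}\,i$. Combined with the Biran--Cornea upper bound $r\le\sqrt{2/3}$ recalled above, this establishes sharpness. The main obstacle is step (ii): a purely linear map produces only an ellipsoidal action-shadow, which cannot exhaust the triangular room toward the divisors and therefore stops well short of $\sqrt{2/3}$, so one is forced into a genuinely nonlinear, area-optimal wrapping that leaves no slack. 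The difficulty is that the round ball couples the two symplectic planes, so one cannot simply take a product of optimal two-dimensional relative packings, which would yield a wasteful polydisc; arranging the coupling so that both the divisor-depth bound and the $T^2$-injectivity bound are saturated at once, while keeping the real disk exactly on the zero-section, is the technical heart of the argument.
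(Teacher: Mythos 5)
Your reduction to action--angle coordinates is essentially the same as the paper's Section 2 (modulo normalization conventions), and the numerics you quote are consistent. But from that point on the proposal contains no proof: the map that is supposed to do all the work --- the ``explicit generating function'' with properties (i)--(iii) --- is never constructed, and those three properties are precisely the content of the theorem. Saying the ball ``bends\dots by at most one minimal-disk depth in the two action directions'' and that the ``optimal matching occurs at $\pi r^2 = 2\pi/3$'' is a heuristic capacity count, not an argument; nothing in the sketch shows that a symplectomorphism with the stated action-shadow exists, nor that it keeps \emph{exactly} $\{p=0\}$, and no other points of the ball, on the Clifford fibre. That exactness condition, $i^{-1}(L) = B^{4}_{\mathbb{R}}(r)$, is a delicate requirement, and your item (i) only says the zero-section is preserved setwise --- it does not rule out interior points of the ball being carried to the level of the zero-section.

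More pointedly, you explicitly dismiss the route that actually works. You assert that ``one cannot simply take a product of optimal two-dimensional relative packings, which would yield a wasteful polydisc,'' but the paper's construction \emph{is} a product $\Phi = \sigma \times \sigma$ of a single two-dimensional area-preserving map, restricted to the round ball $B^{4}(r) \subset B^{2}(r) \times B^{2}(r)$ rather than applied to the polydisc. The key input is Schlenk's Lemma 3.1.5: for every $\epsilon > 0$ there is an area-preserving diffeomorphism $\sigma : B^{2}(r) \to (0,\pi) \times \left(0,\tfrac{2}{3}\right)$ taking exactly the line $\{p=0\}$ to $\{P = \tfrac{1}{3}\}$ and satisfying the level control $P(\sigma(q,p)) \leqslant \tfrac{1}{3} + \tfrac{u}{2} + \epsilon$ whenever $q^{2}+p^{2} \leqslant u$. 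On the round ball one has $u_{1} + u_{2} < r^{2}$ for the capacities of the two factors, whence $P_{1} + P_{2} < \tfrac{2}{3} + \tfrac{r^{2}}{2} + 2\epsilon = 1$ upon choosing $\epsilon = \tfrac{1}{2}\left(\tfrac{1}{3} - \tfrac{r^{2}}{2}\right)$; the ``coupling of the two symplectic planes'' that you correctly identify as the technical heart is resolved automatically by this level-monotonicity of $\sigma$, with no generating function needed. Moreover, the image of the paper's $\Phi$ lies in the single contractible box $(0,\pi)^{2} \times \triangle$, so your wrap-around-$T^{2}$ device and the attendant covering-space injectivity discussion are unnecessary. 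As it stands, your proposal is a research plan whose decisive step is missing, and whose stated reason for rejecting the product construction is false.
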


 Let us mention that in \cite{Bi-Co}, the authors also consider relative packings into
$ ( \mathbb{CP}^{2},\mathbb{T}^{2}) $ by more than one ball,
and obtain obstructions on their radii. The case of three balls
was treated, and is strongly connected to the properties of the
quantum cup-product in Floer homology. The hypothetical upper
bound in this case was found, under the assumption of existence of
pseudo-holomorphic discs with certain properties. It still remains
to show the existence of such discs, and in the case that it will
be proved, one can try to find the example which proves the
tightness of this upper bound.

 The rest of the paper is devoted to proving Theorem~\ref{T:pack
example}. In Section~\ref{S:ConstrOver} we show how to reduce
Theorem ~\ref{T:pack example} to a two-dimensional problem.
Section ~\ref{S:Proofs} contains the proof of Theorem ~\ref{T:pack
example}.

\section{Overview of the construction} \label{S:ConstrOver}

 It is well-known that the open symplectic manifold $ ( \mathbb{CP}^{2} \setminus
\mathbb{CP}^{1}, \omega_{\rm FS} ) $ is symplectomorphic to the
unit ball $ ( B^{4}(1), \omega_{\rm std} ) \subset (
\mathbb{R}^{4}, \omega_{\rm std} ) $. Identifying $ \mathbb{R}^{4}
\cong \mathbb{C}^{2} $, we have a natural action of the torus $
\mathbb{T}^{2} $ on $ B^{4}(1) $. The moment map of this action is
given by
\begin{gather*}
B^{4}(1) \rightarrow \mathbb{R}^{2},\\
(z_1,z_2) \mapsto ( |z_1|^{2} , |z_2|^{2} ).
\end{gather*}

The action of $ \mathbb{T}^2 $, restricted to the complement $ B^4(1) \setminus \{ z_1 z_2 = 0 \} $ of the union
of the two complex axes is free, and therefore, by a standard procedure, we obtain that
$ B^4(1) \setminus \{ z_1 z_2 = 0 \} $ is symplectomorphic to $ \mathbb{T}^2 \times \triangle $. Here we use the notation
$$ \triangle = \{ (p_1,p_2) \mid p_1,p_2 >0 , p_1 + p_2 <1 \} \subset \mathbb{R}^{2} .$$

 Look now at $$ K := \square \times \mathbb{R}^{2}(p_{1},p_{2}) \subset \mathbb{T}^{2} \times \mathbb{R}^{2} ,$$
and its subset $ K' := \square \times \triangle \subset K $, where
$$ \square = \{ (q_1,q_2) \mid 0 < q_1,q_2 < \pi \} \subset \mathbb{T}^{2} .$$
The above symplectomorphism between $ B^4(1) \setminus \{ z_1 z_2 = 0 \} $ and $ \mathbb{T}^2 \times \triangle $
induces a symplectic embedding $$ j: K' \hookrightarrow B^4(1) .$$
 From now on we will consider $ K,K' $
as $$  K = \{ ( q_{1},p_{1} ) \mid 0<q_{1}<\pi  \} \times \{ ( q_{2},p_{2} ) \mid 0<q_{2}< \pi \} \subset \mathbb{R}^{4},$$
$$  K' = \{ ( q_{1},p_{1},q_{2},p_{2} ) \mid 0<q_{1}<\pi , 0<q_{2}< \pi , p_1,p_2 >0 , p_1 + p_2 <1 \} \subset K ,$$
and the symplectic form is $$ dp_{1} \wedge dq_{1} + dp_{2}
\wedge dq_{2} .$$ The Clifford torus lies entirely in $ B^4(1) $, and its pre-image
under the map $ j $ equals to  $$ L' = \{ ( q_{1},p_{1},q_{2},p_{2} ) \mid 0 < q_{1} < \pi, 0 < q_{2} <
\pi, p_{1}=p_{2}=1/3 \} \subset K' .$$ Fix $ r < \sqrt{\frac{2}{3}} $ and consider
\[
B^{4}(r) \subset B^{2}(r)
\times B^{2}(r) \subset \mathbb{R}^{4} .
\]
The construction is
based on finding a certain area-preserving map
\[
\sigma :
B^{2}(r) \rightarrow \mathbb{R}^2 .
\]
Given such an $ \sigma $, we
define $ \Phi : B^{2}(r) \times B^{2}(r) \rightarrow \mathbb{R}^{4} $ as
\[
\Phi (z,w)= \big( \sigma (z),\sigma (w) \big) .
\]
Our aim is to find $ \sigma $ such that
the image of $ B^{4}(r) \hookrightarrow B^{2}(r) \times B^{2}(r) $
under the map $ \Phi $ will be contained in $ K' $, and also
\[
\Phi^{-1}( L' ) \cap B^{4}(r) = B^{4}_{\mathbb{R}}(r) .
\]

\section{Proofs} \label{S:Proofs}

\begin{proof}[Proof of Theorem~\ref{T:pack example}]
Given $ r < \sqrt{\frac{2}{3}} $, we describe a construction of a relative symplectic embedding
 \[
 \big( B^{4}(r) , B^{4}_{\mathbb{R}}(r) \big) \hookrightarrow (\mathbb{CP}^{2},\mathbb{T}^{2}) .
 \]
As it was shown in Section ~\ref{S:ConstrOver}, it is enough to find a symplectic embedding
\[
\Phi: B^{4}(r) \rightarrow \mathbb{R}^{4} ,
\] such that its image
will be contained in the domain
\[
\{ ( q_{1},p_{1},q_{2},p_{2} ) \mid 0<q_{1}< \pi ,
 0<q_{2}< \pi , 0< p_{1} , 0< p_{2} , 0 < p_{1} + p_{2} < 1 \},
\]

\noindent and the pre-image of
 \[
 \{ ( q_{1},p_{1},q_{2},p_{2} ) \mid 0<q_{1}< \pi , 0<q_{2}< \pi,
p_{1}=p_{2}=1/3 \}
\]
will be equal to $B^{4}_{\mathbb{R}}(r) \subset B^{4}(r)$.

\begin{figure}[!b]
\centering{\includegraphics{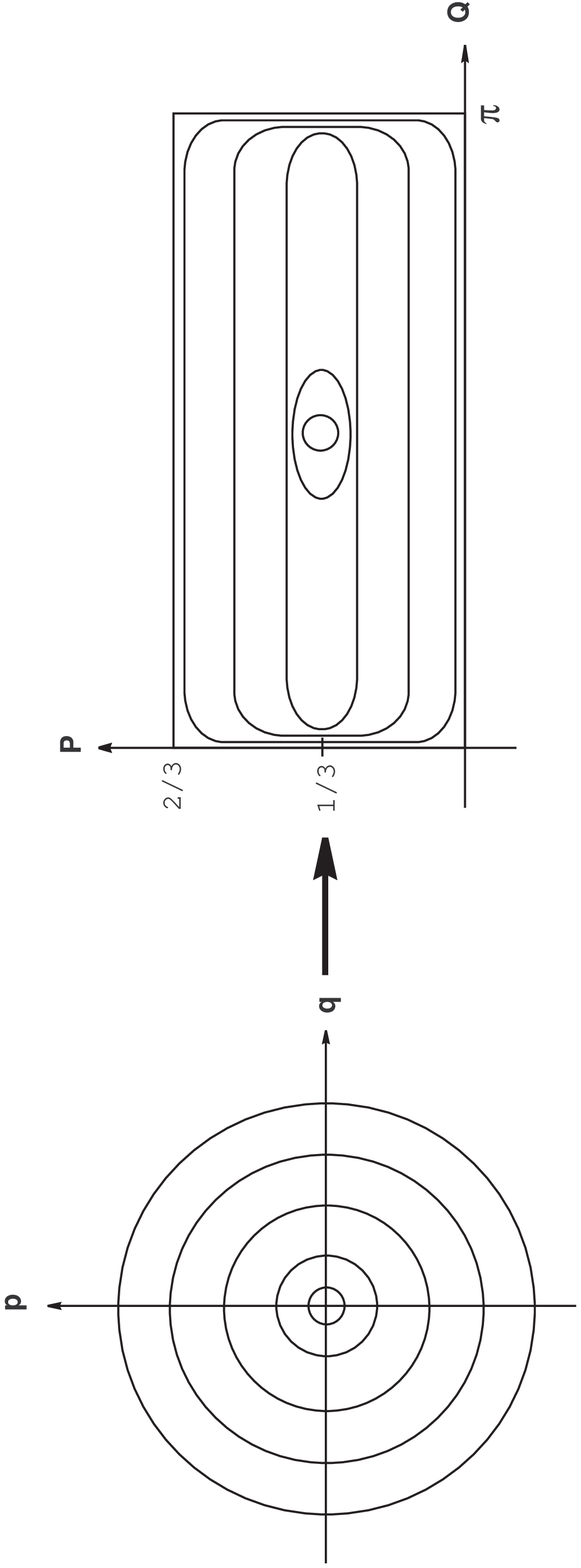}}
\caption{The map $\sigma$\label{fig1}}
\end{figure}

As it was shown by Schlenk (\cite{Sch-4}, Lemma 3.1.5), for
any $\epsilon>0$ there exists an area-preserving
diffeomorphism
\[
\sigma : B^{2}(r) \rightarrow (0,\pi) \times \left(0,\frac{2}{3}\right) \subset \mathbb{R}^{2}(Q,P)
\]
with the following properties (Figure~1):
\begin{enumerate}
\item[(1)]
For each $ u \in (o, r^2 ) $ one has: if $ p^2 \,{+}\,
q^2 \leqslant u $, then $ P(\sigma(q,p)) \leqslant
\frac{1}{3} + \frac{u}{2} + \epsilon$.
\item[(2)]
The line $ \{ p=0 \} $ in $ B^{2}(r)$, and no other
points of $ B^{2}(r) $, is mapped to the line $ \{
(Q,\frac{1}{3})\ |\ Q \in (0,\pi) \}$.
\end{enumerate}

Take $ \epsilon = \frac{1}{2} \big(\frac{1}{3} - \frac{r^2}{2} \big) $, and consider the corresponding map $ \sigma $.
Then for any $ z,w \in \mathbb{R}^{2} $, such that $ (z,w) \in B^{4}(r) $, define
\[
\Phi (z,w)= \big( \sigma (z),\sigma (w) \big) .
\]
We claim that the resulting map $ \Phi : B^{4}(r) \rightarrow
\mathbb{R}^{4} $ satisfies the desired properties. First of all,
it is clear that the map $ \Phi $ is a symplectic embedding, and
the pre-image of
\[
\{ ( q_{1},p_{1},q_{2},p_{2} ) \mid 0<q_{1}< \pi , 0<q_{2}< \pi, p_{1}=p_{2}=1/3 \}
\]
equals $B^{4}_{\mathbb{R}}(r) \subset B^{4}(r) $. Let us show
that, moreover, the image of $ \Phi $ lies in the domain
\[
\{ ( q_{1},p_{1},q_{2},p_{2} ) \mid 0<q_{1}< \pi ,
 0<q_{2}< \pi , 0< p_{1} , 0< p_{2} , 0 < p_{1} + p_{2} < 1 \} .
\]

Take any $ (q_1,p_1,q_2,p_2) \in B^{4}(r) $, then we have
\[
q_{1}^2 + p_{1}^2 + q_{2}^2 + p_{2}^2 < r^2 .
\]
Set $ u = q_{1}^2 + p_{1}^2 $. Then $ q_{2}^2 + p_{2}^2 < r^2 - u $, and so
\begin{align*}
P_{1}(\sigma (q_1,p_1)) + P_{2}(\sigma (q_2,p_2)) &<
\left(\frac{1}{3} + \frac{u}{2} + \epsilon\right) +
\left(\frac{1}{3} + \frac{r^2 - u}{2} + \epsilon\right)\\[6pt]
&= \frac{2}{3} + \frac{r^2}{2} + 2\epsilon \\
&= 1.\\[-3pc]
\end{align*}
\end{proof}

\begin{rem}
 The presented relative packing construction can be naturally generalized to the
corresponding construction of a relative packing of a
\hbox{$2n$-dimensional} ball $ B^{2n}(r) $ into $ ( \mathbb{CP}^n,
\mathbb{T}_{\rm{Cliff}}^n ) $, for any $ n \geqslant 2 $ and radius $ r < \sqrt{\frac{2}{n+1}}$.
This confirms the sharpness of the upper bound for the
radius, obtained by Biran and Cornea~\cite{Bi-Co}, for an
arbitrary dimension.
\end{rem}

\subsubsection*{Acknowledgments} I would like to thank Paul Biran for a valuable
advice. I would also like to thank the referee for many useful
remarks and comments helping to improve the exposition.

\end{document}